\numberwithin{equation}{section}
\theoremstyle{plain}
\newtheorem{theorem}{Theorem}[section]
\newtheorem{lemma}[theorem]{Lemma}
\newtheorem{corollary}[theorem]{Corollary}
\newtheorem{definition}[theorem]{Definition}
\theoremstyle{remark}
\newtheorem{remark}[theorem]{Remark}
\newcommand\Pone{\textrm{P}_{\textrm{I}}}
\newcommand\Ptwo{\textrm{P}_{\textrm{II}}}
\newcommand\Pfive{\textrm{P}_{\textrm{V}}}
\title[Elliptic Asymptotic Behaviour of $q$-$\Ptwo$]{Elliptic asymptotic behaviour of $q$-Painlev\'e transcendents}
\author{Joshua Holroyd}
\date{\today}
\begin{document}
\begin{abstract}
    The discrete Painlev\'e equations have mathematical properties closely related to those of the differential Painlev\'e equations. We investigate the appearance of elliptic functions as limiting behaviours of $q$-Painlev\'e transcendents, analogous to the asymptotic theory of classical Painlev\'e transcendents. We focus on the $q$-difference second Painlev\'e equation ($q$-$\Ptwo$) in the asymptotic regime $|q-1|\ll1$, showing that generic leading-order behaviour is given in terms of elliptic functions and that the slow modulation in this behaviour is approximated in terms of complete elliptic integrals.
\end{abstract}
\maketitle

\section{Introduction} Given quantities $a,q,t_0\in\mathbb{C}^*$ with $q\neq1$,  we consider the $q$-difference second Painlev\'e equation \cite{ramani1996discrete}
\begin{align*}
    q\text{-}\Ptwo:\qquad f(qt)f(t/q)=\frac{a(1+f(t)t)}{f(t)(f(t)+t)},
\end{align*}
where $f$ is a function of $t=t_n=t_0q^{n}$, $n\in\mathbb{Z}$; this becomes the second Painlev\'e equation in a continuum limit \cite{ramani1996discrete}. The equation $q$-$\Ptwo$ belongs to a class of integrable, second-order, nonlinear difference equations known as discrete Painlev\'e equations, which each tend to a classical Painlev\'e equation in a continuum limit. A multiplicative difference equation (termed `$q$-difference') is characterised by iteration of the independent variable $t$ along a $q$-spiral $t_0q^{\mathbb{Z}}\subset\mathbb{C}$ for some initial point $t_0\neq0$ and constant $q\neq 0,1$.

Painlev\'e and colleagues discovered the six classical Painlev\'e equations in their effort to categorise all second-order, ordinary differential equations (ODEs) with solutions featuring no movable singularities other than poles; now known as the Painlev\'e property \cite{painleve1902equations,fuchs1905quelque,gambier1910equations}. There has been growing interest in these equations in modern times, owing to their role as mathematical models in various physical contexts. They are considered the nonlinear counterpart to classical special functions \cite{bornemann2010request}.

Recently, there has been significant interest in discrete versions of Painlev\'e equations. These first arose in orthogonal polynomial theory \cite{shohat1939differential}, and more recently in the study of a two-dimensional model of quantum gravity \cite{its1990isomonodromy}. It is now known that for each classical Painlev\'e equation, there are numerous integrable, second-order discrete versions; many of these were discovered via the singularity confinement property \cite{ramani1991discrete}, which is considered a discrete analogue of the aforementioned Painlev\'e property. Due to their prominence in various nonlinear models of mathematical physics, discrete Painlev\'e equations are regarded as defining new nonlinear special functions, as is the case with continuous Painlev\'e equations.

In general, solutions of the Painlev\'e equations are highly transcendental functions, known as Painlev\'e transcendents, and cannot be expressed in terms of known functions. This concept also applies to the discrete versions of Painlev\'e equations. It is then unsurprising that there has been considerable research regarding the asymptotic analysis of Painlev\'e equations; several key solutions of physical interest were initially identified via asymptotic analysis \cite{mccoy1977painleve,wu1976spin} as the independent variable approaches a fixed singularity.

Consider the Boutroux form of the second Painlev\'e equation ($\Ptwo$), which is more amenable to asymptotic analysis as the independent variable becomes large \cite{boutroux1913recherches}
\begin{align*}
    u_{zz}=2u^3-u-\frac{\alpha+u_z}{z}+\frac{u}{9z^2}.
\end{align*}
Here, $\alpha$ is a constant parameter, and the subscripts denote differentiation. The above equation becomes autonomous in the limit $|z|\to\infty$, and we thus find that the following quantity is constant to leading order
\begin{align*}
    E(u,u_z)=\frac{1}{2}\left(u_z^2-u^4+u^2\right).
\end{align*}
Considering constant $E$ above, the generic solution $u(z)$ is a (Jacobi) elliptic function.

In the case of the first and second Painlev\'e equations, these behaviours were first considered by Boutroux \cite{boutroux1913recherches,boutroux1914recherches}. Furthermore, for $\Pone$ and $\Ptwo$, Joshi and Kruskal studied the modulation of these leading-order elliptic functions (i.e. the slow variation of $E$ above) as the angle of approach to infinity is varied within a bounded sector of the complex $z$-plane \cite{joshi1987connection,joshi1988asymptotic,joshi1992painleve}. An averaging method showed that the slow modulation of $E$ is given to leading order by complete elliptic integrals associated with the leading-order elliptic function. Analogous results for the third, fourth and fifth Painlev\'e equations are given in \cite{joshi2018asymptotic}.

In this article, we investigate these concepts in the $q$-difference setting. We consider $q$-$\Ptwo$ with $q=1+\epsilon$ where $0<|\epsilon|\ll1$ is a small parameter; that is, by decreasing the step size $|t_{n+1}-t_n|=|\epsilon t_n|$, we consider a generic solution of $q$-$\Ptwo$ while staying within a bounded region near an arbitrary point $t_0\in\mathbb{C}^*$. In this asymptotic context, we show that the leading-order autonomous form of $q$-$\Ptwo$ is solved in terms of Jacobi elliptic functions. Furthermore, we extend the averaging method and describe the slow evolution of this leading-order elliptic behaviour in terms of complete elliptic integrals. Finally, we consider critical points in the initial value space where solutions degenerate to being singly periodic, as is the case for continuous Painlev\'e transcendents along certain rays in the complex plane (Stokes boundaries).

This investigation provides a new fundamental link between discrete and continuous Painlev\'e equations and develops a new understanding of the generic behaviour of a $q$-difference Painlev\'e transcendent. Our main results are given by Theorems \ref{thrm:elliptic} and \ref{thrm:main2}.

\section{Elliptic Behaviour}
In this section, we consider the autonomous version of $q$-$\Ptwo$, obtained at leading order by setting $q=1+\epsilon$, where $\epsilon\in\mathbb{C}^*$ is a small parameter. After providing a bound on the error associated with the leading-order equation, we give the generic solution in terms of Jacobi elliptic functions in the limit $\epsilon\to0$.

We begin by formally defining a solution of $q$-$\Ptwo$, the quantity $E$ which is conserved at leading order, and a quantity $L$ which arises naturally when considering the slight difference in $E$ as we iterate $n\to n+1$.
\begin{definition}\label{def:f}
    Let $a,f_0,f_1,t_0,\epsilon\in\mathbb{C}^*$, with $|\epsilon|<1$. We define the sequence $t:\mathbb{N}\rightarrow\mathbb{C}$ by $t_n=t_0(1+\epsilon)^n$, and the corresponding sequence $f:\mathbb{N}\rightarrow\mathbb{C}$ by
    \begin{align}\label{eq:qP2}
        f_{n+1}f_{n-1}=\frac{a(1+f_nt_n)}{f_n(f_n+t_n)}.
    \end{align}
\end{definition}
\begin{definition}\label{def:E}
    We define the sequences $E:\mathbb{N}\rightarrow\mathbb{C}$ and $L:\mathbb{N}\rightarrow\mathbb{C}$ by
    \begin{align}\label{eq:invariant}
        E_n=&\frac{f_{n+1}^2f_n^2+t_0f_{n+1}f_n(f_{n+1}+f_n)+at_0(f_{n+1}+f_{n})+a}{f_{n+1}f_n},\\
        L_n=&\frac{(a+f_{n+1}f_n)(f_{n+1}+f_n)}{f_{n+1}f_n}.
    \end{align}    
\end{definition}
We study the sequence $f_n$ in a domain where it remains bounded and sufficiently away from zero, which is always possible due to the meromorphic nature of solutions of discrete Painlev\'e equations, leading to the following definition.
\begin{definition}\label{def:bounds}
    Given some $n\in\mathbb{N}$, we denote finite value $J_n\in\mathbb{R}^+$ such that for all $k\in\mathbb{N}$, with $0\leq k\leq n$, the sequence $f_k$ satisfies
    \begin{align*}
        \left|L_k\right|=\left|\frac{a}{f_{k+1}}+\frac{a}{f_k}+f_{k+1}+f_k\right|\leq J_n.
    \end{align*}
\end{definition}
Across an arbitrary $n\in\mathbb{N}$ number of steps, we proceed to give a bound on the difference $|E_n-E_{0}|$, and thus provide sufficient conditions for this difference to remain small as $\epsilon\to0$.
\begin{lemma}\label{lem:LOEq}
    Let $n\in\mathbb{N}$ and assume the existence of finite value $J_n$ as described in Definition \ref{def:bounds}. Then we obtain
    \begin{align*}
        |E_n-E_0|<2\left|t_0\right|J_n\left(e^{|\epsilon|n}-1\right),
    \end{align*}
    and thus $|E_n-E_{0}|\ll 1$ as $\epsilon\to0$ while $|\epsilon|n\ll1$.
\end{lemma}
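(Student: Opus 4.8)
The plan is to reduce the accumulated discrepancy $|E_n-E_0|$ to a sum of controllable one-step differences. First I would compute $E_n-E_{n-1}$ directly from Definition~\ref{def:E}, writing $E_n$ in the form $f_{n+1}f_n+t_0(f_{n+1}+f_n)+at_0(1/f_{n+1}+1/f_n)+a/(f_{n+1}f_n)$ and using the recurrence \eqref{eq:qP2} to eliminate the product $f_{n+1}f_{n-1}$. The terms carrying the coefficient $t_0$ (from $E$) and $t_n$ (from the recurrence) should combine so that the mismatch collapses to the single factor $t_n-t_0$, giving the identity
\[
    E_n-E_{n-1}=(t_n-t_0)(L_{n-1}-L_n).
\]
This is exactly where $L$ enters naturally: a parallel computation shows $L_{n-1}-L_n=(f_{n+1}-f_{n-1})(f_n^2-1)/(1+f_nt_n)$, which is precisely the factor multiplying $t_n-t_0$ in the raw expansion. (Equivalently, $E_n$ is affine in its coefficient $t_0$ with slope $L_n$, which explains the appearance of $L$.) Verifying these cancellations is the main algebraic step.

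Summing over $k=1,\dots,n$ gives $E_n-E_0=\sum_{k=1}^n(t_k-t_0)(L_{k-1}-L_k)$. A direct triangle-inequality bound here is too crude: estimating $|t_k-t_0|\le|t_0|(e^{|\epsilon|n}-1)$ and $|L_{k-1}-L_k|\le2J_n$ term by term costs a spurious factor of $n$. The decisive step is therefore a summation by parts, which I would use to rewrite
\[
    E_n-E_0=\sum_{k=1}^n(t_k-t_{k-1})L_{k-1}-(t_n-t_0)L_n.
\]
Here each increment satisfies $t_k-t_{k-1}=\epsilon\,t_{k-1}$, so it is genuinely of order $\epsilon$ and summing $n$ of them no longer inflates the estimate by a factor of $n$.

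To finish I would apply the triangle inequality together with the hypothesis $|L_k|\le J_n$ for $0\le k\le n$, obtaining $|E_n-E_0|\le J_n\bigl(\sum_{k=1}^n|t_k-t_{k-1}|+|t_n-t_0|\bigr)$. Both bracketed quantities are governed by the same elementary estimate based on $|1+\epsilon|\le1+|\epsilon|\le e^{|\epsilon|}$: one has $|t_n-t_0|=|t_0|\,|(1+\epsilon)^n-1|\le|t_0|\bigl((1+|\epsilon|)^n-1\bigr)\le|t_0|(e^{|\epsilon|n}-1)$, and summing the geometric series $\sum_{k=1}^n|t_0||\epsilon|\,|1+\epsilon|^{k-1}$ while using $e^{|\epsilon|}-1\ge|\epsilon|$ gives $\sum_{k=1}^n|t_k-t_{k-1}|\le|t_0|(e^{|\epsilon|n}-1)$ as well. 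Adding these two bounds produces the factor of $2$ and the claimed estimate, with strict inequality following from $1+|\epsilon|<e^{|\epsilon|}$ for $\epsilon\neq0$. The concluding assertion is then immediate, since $|\epsilon|n\ll1$ forces $e^{|\epsilon|n}-1\approx|\epsilon|n\to0$ as $\epsilon\to0$, so with $J_n$ finite the right-hand side is $\ll1$.

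I expect the principal difficulties to be twofold. The first is purely computational: confirming that the expansion of $E_n-E_{n-1}$, after substituting the recurrence, really does collapse to the clean product $(t_n-t_0)(L_{n-1}-L_n)$ rather than a messier expression. The second, and more conceptual, is recognising that summation by parts is essential — without redistributing the differences onto the small increments $t_k-t_{k-1}$, the estimate degrades by a factor of $n$ and fails to yield the stated bound.
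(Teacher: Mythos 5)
Your proposal is correct and follows essentially the same route as the paper: the same one-step identity $E_k-E_{k-1}=-(t_k-t_0)(L_k-L_{k-1})$, the same summation by parts to shift the differences onto the $\mathcal{O}(\epsilon)$ increments $t_k-t_{k-1}$, and the same pair of exponential bounds yielding the factor $2\left|t_0\right|J_n\left(e^{|\epsilon|n}-1\right)$. The only differences are notational (the paper works with $P_k=t_k-t_0$ and its binomial expansion rather than the increments $t_k-t_{k-1}$ directly), so there is nothing substantive to add.
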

\begin{proof}
    Let $n\in\mathbb{N}$. From definitions \ref{def:f} and \ref{def:E} we directly compute
    \begin{align}\label{eq:EdiffFirst}
        \nonumber E_n-E_{n-1}=&\frac{f_{n+1}-f_{n-1}}{f_{n+1}f_nf_{n-1}}\left(f_{n+1}f_n^2f_{n-1}+t_0f_{n+1}f_nf_{n-1}-at_0f_n-a\right)\\
        =&-(t_n-t_0)\frac{\left(f_{n+1}-f_{n-1}\right)\left(f_{n+1}f_{n-1}-a\right)}{f_{n+1}f_{n-1}}\nonumber\\
        =&-P_n(L_n-L_{n-1}),
    \end{align}
    where we denote $P_n$, for $n\in\mathbb{N}$, the binomial expansion
    \begin{align*}
        P_n=t_n-t_0=t_0\sum_{k=1}^{n}\begin{pmatrix}n\\k\end{pmatrix}\epsilon^k,\quad\text{with}\quad P_0=0.
    \end{align*}
    Then, it is straightforward to see that
    \begin{align*}
        \left|P_n\right|\leq&\left|t_0\right|\sum_{k=1}^{n}\frac{\left(n\left|\epsilon\right|\right)^k}{k!}<\left|t_0\right|\left(-1+\sum_{k=0}^{\infty}\frac{\left(n\left|\epsilon\right|\right)^k}{k!}\right)=\left|t_0\right|\left(e^{n|\epsilon|}-1\right),
    \end{align*}
    which immediately yields
    \begin{align}\label{ineq:Ediff}
        \left|E_n-E_{n-1}\right|<\left|t_0\right|\left|L_n-L_{n-1}\right|\left(e^{|\epsilon|n}-1\right).
    \end{align}
    
    Summing the equation $E_k-E_{k-1}=-P_k(L_k-L_{k-1})$ across $k=1$ to $k=n$ and applying summation by parts (analogous to integration by parts), we obtain
    \begin{align}\label{eq:diffE0n}
        E_n-E_0=-L_nP_n+t_0\epsilon\sum_{k=0}^{n-1}L_k\left(\frac{P_k}{t_0}+1\right).
    \end{align}
    Using the fact that $|P_k|<|t_0|(e^{k|\epsilon|}-1)$ for any $k\in\mathbb{N}$, it is straightforward to furthermore see that
    \begin{align*}
        \left|\sum_{k=0}^{n-1}L_k\left(\frac{P_k}{t_0}+1\right)\right|<\sum_{k=0}^{n-1}\left|L_k\right|e^{|\epsilon|k}\leq J_n\sum_{k=0}^{n-1}e^{|\epsilon|k}=J_n\frac{e^{|\epsilon|n}-1}{e^{|\epsilon|}-1}.
    \end{align*}
    Thus from Equation \eqref{eq:diffE0n} we obtain
    \begin{align*}
        |E_n-E_0|<&|L_nP_n|+|t_0\epsilon|J_n\frac{e^{|\epsilon|n}-1}{e^{|\epsilon|}-1}<2\left|t_0\right|J_n\left(e^{|\epsilon|n}-1\right),
    \end{align*}
    as required.
\end{proof}
Having established an error-bound, we consider the leading-order, autonomous equation
\begin{align}\label{eq:LOeq}
    f_{n+1}^2f_n^2+t_0f_{n+1}f_n(f_{n+1}+f_n)-E_0f_{n+1}f_n+at_0(f_{n+1}+f_{n})+a=H_n,
\end{align}
for arbitrary $n\in\mathbb{N}$, where by Lemma \ref{lem:LOEq}
\begin{align}\label{eq:HBound}
    |H_n|=|f_{n+1}f_n(E_n-E_0)|<2\left|t_0f_{n+1}f_n\right|J_n\left(e^{|\epsilon|n}-1\right),
\end{align}
so that $|H_n|\ll1$ as $\epsilon\to0$ while $|\epsilon|n\ll1$, given the existence of finite value $J_n\in\mathbb{R}^+$.

We proceed to define a linear fractional transformation of the function $f_n$, thus converting the LHS of Equation (\ref{eq:LOeq}) to the canonical bi-quadratic form $x^2y^2+\gamma(x^2+y^2)+\zeta xy+1$ for some $\gamma,\zeta\in\mathbb{C}$. These constants are provided in the following lemma, where we take the case $a=1$. Setting $a=1$ provides a more tractable illustrative example while not being a meaningful degeneration of the problem.
\begin{lemma}\label{lem:cononForm}
    With conditions $E_0\neq 2\pm4t_0$, define the sequence $F:\mathbb{N}\rightarrow\mathbb{C}$ by
    \begin{align*}
        f_n=\frac{c+F_n}{c-F_n}\quad\text{with $c\in\mathbb{C}^*$ such that}\quad c^4=\frac{2-E_0-4t_0}{2-E_0+4t_0}.
    \end{align*}
    Then considering bounded $n\in\mathbb{N}$ and taking the case $a=1$, Equation (\ref{eq:LOeq}) becomes
    \begin{align*}
        F_{n+1}^2F_{n}^2+\gamma\left(F_{n+1}^2+F_{n}^2\right)+\zeta F_{n+1}F_n+1=K_n\to0,\quad\epsilon\to0,
    \end{align*}
    where
    \begin{align*}
        \gamma=\frac{(2+E_0)c^2}{2-E_0-4t_0},\quad\zeta=\frac{8c^2}{2-E_0-4t_0},\quad K_n=\frac{(c-F_n)^2(c-F_{n+1})^2}{2-E_0-4t_0}H_n.
    \end{align*}
\end{lemma}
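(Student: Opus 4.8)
The plan is to prove the lemma by direct substitution, reading \eqref{eq:LOeq} with $a=1$ as the symmetric bi-quadratic relation $G(f_{n+1},f_n)=H_n$, where
\[
G(x,y)=x^2y^2+t_0\,xy(x+y)-E_0\,xy+t_0(x+y)+1.
\]
First I would substitute $f_n=(c+F_n)/(c-F_n)$ and $f_{n+1}=(c+F_{n+1})/(c-F_{n+1})$ and then clear denominators by multiplying both sides through by $(c-F_{n+1})^2(c-F_n)^2$. The left-hand side then becomes a polynomial in $F_{n+1},F_n$ that is again symmetric and quadratic in each variable, while the right-hand side becomes exactly $(c-F_{n+1})^2(c-F_n)^2H_n$; the task reduces to collecting the coefficients of the former and matching them to the canonical form.

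The key structural observation, and the reason the case $a=1$ is singled out, concerns the terms odd under $(F_n,F_{n+1})\mapsto(-F_n,-F_{n+1})$. Writing $u=c^2+F_{n+1}F_n$, $v=c(F_{n+1}+F_n)$ and $w=c^2-F_{n+1}F_n$, one has $(c+F_{n+1})(c+F_n)=u+v$, $(c-F_{n+1})(c-F_n)=u-v$, and $(c+F_{n+1})(c-F_n)+(c+F_n)(c-F_{n+1})=2w$. The two $t_0$-contributions (from $xy(x+y)$ and from $x+y$, which share the common coefficient $t_0$ precisely because $a=1$) then combine to $2t_0w\big[(u+v)+(u-v)\big]=4t_0\,uw=4t_0\big(c^4-F_{n+1}^2F_n^2\big)$, in which the odd part $v$ has cancelled; the quartic and constant pieces, $(u+v)^2+(u-v)^2=2u^2+2v^2$, are likewise even. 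I expect this cancellation of the odd monomials $F_{n+1}^2F_n+F_{n+1}F_n^2$ and $F_{n+1}+F_n$ to be the main point of the calculation: it is what removes the linear and cubic terms and brings the relation into canonical shape, and it would fail for $a\neq1$ (the leftover odd part being proportional to $1-a$).

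Assembling the remaining even terms, I anticipate the identity
\[
(c-F_{n+1})^2(c-F_n)^2\,G=(2-E_0-4t_0)F_{n+1}^2F_n^2+(2+E_0)c^2\big(F_{n+1}^2+F_n^2\big)+8c^2F_{n+1}F_n+(2-E_0+4t_0)c^4.
\]
The defining condition $c^4=(2-E_0-4t_0)/(2-E_0+4t_0)$ is exactly what equates the constant coefficient $(2-E_0+4t_0)c^4$ with the leading coefficient $2-E_0-4t_0$, while the hypothesis $E_0\neq2\pm4t_0$ guarantees this common value is nonzero and that $c$ exists and is nonzero. Dividing through by $2-E_0-4t_0$ then normalizes both the $F_{n+1}^2F_n^2$ and constant coefficients to $1$, reads off $\gamma=(2+E_0)c^2/(2-E_0-4t_0)$ and $\zeta=8c^2/(2-E_0-4t_0)$, and turns the right-hand side into $K_n=(c-F_n)^2(c-F_{n+1})^2H_n/(2-E_0-4t_0)$, as claimed.

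Finally, to conclude $K_n\to0$ as $\epsilon\to0$, I would invoke the bound \eqref{eq:HBound}, which gives $|H_n|\ll1$ for bounded $n$, and observe that the prefactor $(c-F_n)^2(c-F_{n+1})^2/(2-E_0-4t_0)$ stays bounded because $f_n$ remains bounded and away from the pole $f=-1$ of the transformation in the domain of interest, so that $F_n$ is bounded. The product then tends to $0$, completing the argument.
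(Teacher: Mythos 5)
Your proposal is correct and follows essentially the same route as the paper, whose proof simply asserts that the result ``may be verified straightforwardly via substitution'' and that $K_n\to0$ because $H_n\to0$ while $F_n$ stays finite and $E_0\neq2\pm4t_0$. Your write-up supplies the details the paper omits (the $u,v,w$ parity decomposition, the explicit matched identity, and the role of the choice of $c^4$), all of which check out.
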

\begin{proof}
    The result may be verified straightforwardly via substitution. Considering Equation \eqref{eq:HBound}, we see that $K_n\to0$ as $\epsilon\to0$ given $n$ is bounded (or $|\epsilon|n\ll1$), $F_n$ is finite and $E_0\neq 2\pm4t_0$. The condition $E_0\neq 2\pm4t_0$ on the initial value space of $f$ is discussed in the subsequent remark.
\end{proof}
\begin{remark}
    The transformation described in Lemma \ref{lem:cononForm}, applied to Equation \eqref{eq:LOeq}, breaks down with initial conditions causing $E_0=2\pm4t_0$. By observing Equation \eqref{eq:qP2}, we see that these conditions correspond to stationary solutions $f_n=\pm1$ for all $n\in\mathbb{N}$ (in the $a=1$ case). Expansions of the solution $f$ near critical points in the initial value space are given in Appendix \ref{sec:CritPoints}.
\end{remark}
This section's core result is that $F_n$ described in Lemma \ref{lem:cononForm} is given by the Jacobi elliptic sine function in the limit $\epsilon\to0$.
\begin{theorem}\label{thrm:elliptic}
    Consider bounded $n\in\mathbb{N}$ and  $a=1$. Regarding the sequence $F_n$ described in Lemma \ref{lem:cononForm} we obtain
    \begin{align*}
        F_n\to A\,\text{sn}(z_0+pn;k),\quad\epsilon\rightarrow0.
    \end{align*}
    Here, sn$(z;k)$ is Jacobi's elliptic sine function of argument $z$ and modulus $k$. Furthermore, $A^2=k$ and $k$ satisfying $0<|k|<1$ is uniquely determined by
    \begin{align*}
        k^2+\beta k+1=0,\quad \beta:=\frac{4+4\gamma^2-\zeta^2}{4\gamma}.
    \end{align*}
    Finally, $p$ is chosen such that $k\text{sn}^2(p;k)=-1/\gamma$ and $z_0$ is representative of the initial condition $f_0$, satisfying $A\text{sn}(z_0;k)=F_0$.
\end{theorem}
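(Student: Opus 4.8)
The plan is to pass to the $\epsilon\to0$ limit, where Lemma~\ref{lem:cononForm} gives $K_n\to0$, so that the limiting sequence satisfies the exact symmetric biquadratic relation
\begin{align*}
    F_{n+1}^2F_n^2+\gamma\left(F_{n+1}^2+F_n^2\right)+\zeta F_{n+1}F_n+1=0.
\end{align*}
The essential observation is that this relation is invariant under interchanging its two arguments. Reading it as a quadratic in $y$ with $F_n$ held fixed,
\begin{align*}
    \left(F_n^2+\gamma\right)y^2+\zeta F_n\,y+\left(\gamma F_n^2+1\right)=0,
\end{align*}
both $F_{n+1}$ and $F_{n-1}$ are roots, since the same relation governs the pair $(F_{n-1},F_n)$. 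Vieta's formulas then give
\begin{align*}
    F_{n+1}+F_{n-1}=\frac{-\zeta F_n}{F_n^2+\gamma},\qquad F_{n+1}F_{n-1}=\frac{\gamma F_n^2+1}{F_n^2+\gamma}.
\end{align*}
It therefore suffices to produce a sequence satisfying these sum and product relations, and I would verify that $F_n=A\,\text{sn}(z_0+pn;k)$ does so for suitable $A,k,p$.

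Writing $w=z_0+pn$ so that $F_{n\pm1}=A\,\text{sn}(w\pm p;k)$, I would invoke the standard addition formulas
\begin{align*}
    \text{sn}(w+p)\,\text{sn}(w-p)=\frac{\text{sn}^2 w-\text{sn}^2 p}{1-k^2\,\text{sn}^2 w\,\text{sn}^2 p},\qquad \text{sn}(w+p)+\text{sn}(w-p)=\frac{2\,\text{sn} w\,\text{cn} p\,\text{dn} p}{1-k^2\,\text{sn}^2 w\,\text{sn}^2 p}.
\end{align*}
Substituting $\text{sn}^2 w=F_n^2/A^2$ and comparing the product relation as an identity in $F_n$ forces the normalisation $A^2=k$ together with $k\,\text{sn}^2(p;k)=-1/\gamma$, whereupon numerators and denominators match term by term. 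Comparing the sum relation under the same substitutions then collapses to the single scalar condition $\text{cn}(p;k)\,\text{dn}(p;k)=-\zeta/(2\gamma)$.

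The crux is to show these conditions are mutually consistent and pin down $k$. Observe that $A^2=k$ and $k\,\text{sn}^2 p=-1/\gamma$ merely tie $A$ and $p$ to $k$, leaving $k$ free; the sum condition then becomes an equation for $k$ alone. Using $\text{cn}^2 p=1-\text{sn}^2 p$ and $\text{dn}^2 p=1-k^2\text{sn}^2 p$ with $\text{sn}^2 p=-1/(k\gamma)$, I would square the sum condition and simplify to
\begin{align*}
    \gamma^2+\gamma\left(k+\tfrac1k\right)+1=\frac{\zeta^2}{4},
\end{align*}
which rearranges to exactly $k^2+\beta k+1=0$ with $\beta=(4+4\gamma^2-\zeta^2)/(4\gamma)$. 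Since the two roots of this quadratic multiply to $1$, at most one lies in $0<|k|<1$; for generic data exactly one does, giving a non-degenerate modulus and fixing $k$ uniquely. The squaring is harmless because the sign of $\text{cn} p\,\text{dn} p$ can always be adjusted by replacing $p$ with $2K-p$, which preserves $\text{sn}^2 p$; this fixes $p$, then $A=\sqrt{k}$, and finally $z_0$ is determined by the datum via $A\,\text{sn}(z_0;k)=F_0$.

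The main obstacle I anticipate is precisely this compatibility step: one must check that matching both the sum and the product of $F_{n\pm1}$ does not over-determine the three parameters, and the content of the theorem is that the residual condition reduces cleanly to the stated quadratic in $k$. The convergence itself is softer: since $K_n\to0$ uniformly over bounded $n$ by \eqref{eq:HBound} and the definition of $K_n$ in Lemma~\ref{lem:cononForm}, continuous dependence of the finitely many iterates on the vanishing perturbation yields $F_n\to A\,\text{sn}(z_0+pn;k)$ as $\epsilon\to0$.
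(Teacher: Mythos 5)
Your proof is correct, and it rests on the same two pillars as the paper's own argument: the addition theorem for the Jacobi sine, and a soft continuity argument (finitely many iterates, $K_n\to0$) to transfer the conclusion from the exact autonomous biquadratic back to $F_n$. The difference is in how the verification is organised. The paper substitutes $G(z)=A\,\text{sn}(z;k)$ and $G(z+p)$ directly into the biquadratic and checks that it vanishes identically once the stated choices of $A$, $p$, $k$ are imposed; the parameter conditions are handed to the reader and only their sufficiency is checked. You instead exploit the symmetry of the biquadratic to view $F_{n+1}$ and $F_{n-1}$ as the two roots of a single quadratic in $y$, extract the three-term sum and product relations by Vieta, and match these against the $\text{sn}(w\pm p)$ product and sum identities: the product forces $A^2=k$ and $k\,\text{sn}^2(p;k)=-1/\gamma$, the sum collapses to $\text{cn}(p)\,\text{dn}(p)=-\zeta/(2\gamma)$, and elimination (your intermediate identity $\gamma^2+\gamma(k+1/k)+1=\zeta^2/4$ is correct) yields exactly $k^2+\beta k+1=0$. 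What your route buys is a genuine derivation: it explains where the quadratic for $k$ comes from, shows the three conditions are mutually consistent rather than over-determined, and repairs the sign lost in squaring via $p\mapsto 2K-p$ (where $K$ is the quarter-period, so $\text{sn}(2K-u)=\text{sn}(u)$ while $\text{cn}(2K-u)=-\text{cn}(u)$), none of which the paper makes explicit behind its one-line verification. What the paper's route buys is brevity. One caveat you share with the paper: because the biquadratic is a two-valued recursion, identifying the limit of $F_n$ with the sn-sequence strictly requires fitting the initial \emph{pair} $(F_0,F_1)$, not just $F_0$; the residual freedoms $z_0\mapsto 2K-z_0$ and $p\mapsto 2K-p$ supply exactly this, but both you and the paper leave the point implicit.
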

\begin{proof}
    Let $A,k,p,z\in\mathbb{C}$ and define $G(z):=A\,\text{sn}(z;k)$. The result follows by applying the well-known addition theorem regarding the Jacobi sine function:
    \begin{align*}
        \text{sn}(x+y)=&\frac{\text{sn}(x)\text{cn}(y)\text{dn}(y)+\text{sn}(y)\text{cn}(x)\text{dn}(x)}{1-k^2\text{sn}^2(x)\text{sn}^2(y)},\quad\forall x,y\in\mathbb{C},
    \end{align*}
    where we have suppressed the modulus $k$ for simplicity, and the elliptic functions sn, cn and dn are related by
    \begin{align*}
        \text{cn}^2(x)+\text{sn}^2(x)=1,\quad\text{dn}^2(x)+k^2\text{sn}^2(x)=1,\quad \forall x\in\mathbb{C}.
    \end{align*}
    Applying the above formulas, we find that $G(z)$ satisfies
    \begin{align*}
        G(z)^2G(z+p)^2+\gamma\left(G(z)^2+G(z+p)^2\right)+\zeta G(z)G(z+p)+1=0,\quad\forall z\in\mathbb{C},
    \end{align*}
    given we choose $A$ satisfying $A^2=k$, $p$ satisfying $G(p)^2=-1/\gamma$, and $k$ satisfying
    \begin{align*}
        k^2+\beta k+1=0,\quad \beta:=\frac{4+4\gamma^2-\zeta^2}{4\gamma}.
    \end{align*}
    Without loss of generality, we take the solution of the above quadratic satisfying $|k|<1$, noting that this equation is invariant under $k\to1/k$. Considering Lemma \ref{lem:cononForm}, it now follows that $F_n\to G(z_0+pn)$ as $\epsilon\to0$, given we choose appropriate initial condition $z_0\in\mathbb{C}$ satisfying $G(z_0)=F_0$.
\end{proof}
Utilising Theorem \ref{thrm:elliptic}, we proceed to define appropriate continuous functions $f(x),E(x),L(x)$, which will be important to Section \ref{sec:averaging}.
\begin{definition}\label{def:cont}
    Following Lemma \ref{lem:cononForm} and Theorem \ref{thrm:elliptic}, we define the continuous function $f:\mathbb{R}\rightarrow\mathbb{C}$ so that $f(n)=f_n$ for bounded $n\in\mathbb{N}$, and
    \begin{align*}
        f(x)\to\frac{c+A\,\text{sn}(z_0+px)}{c-A\,\text{sn}(z_0+px)},\quad\epsilon\rightarrow0,
    \end{align*}
    for bounded $x\in\mathbb{R}$. We likewise define corresponding continuous functions $E(x)$ and $L(x)$ (see Definition \ref{def:E}).
\end{definition}
In Figures \ref{fig:numeric} and \ref{fig:numeric2}, we show numerically computed sequences $f_n$ satisfying Equation (\ref{eq:qP2}), and the corresponding approximations given by Theorem \ref{thrm:elliptic}.
\begin{figure}
    \centering
    \begingroup
    \begin{tabular}{c}
    \includegraphics[width=0.9\textwidth]{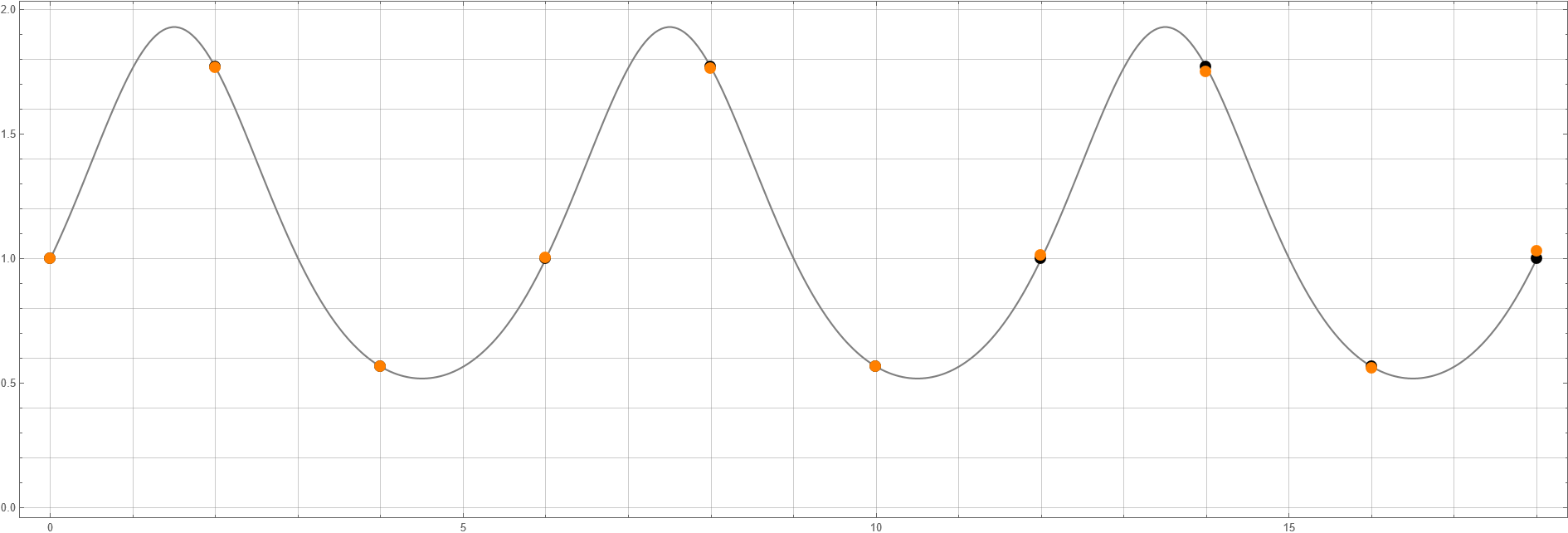} \\
    \includegraphics[width=0.9\textwidth]{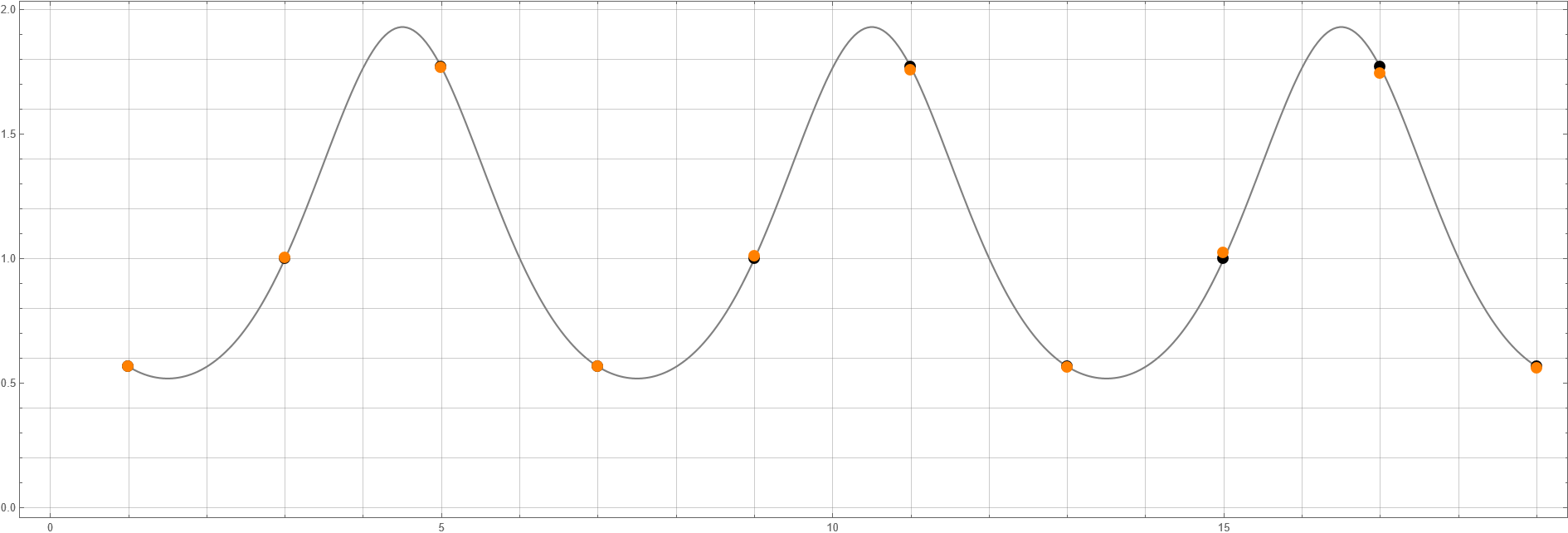}
    \end{tabular}
    \endgroup
    \caption{Elliptic behaviour example A} Parameters and initial conditions are $a=f_0=t_0=1$, $E_0=20/3$ and $\epsilon=1/1000$. The top and bottom plots distinguish between even and odd iterates of $f_n$. The orange points represent the numerically computed sequence $f_n$ solving Equation (\ref{eq:qP2}), and the black points are given by Theorem \ref{thrm:elliptic}.
    \label{fig:numeric}
\end{figure}
\begin{figure}
    \centering
    \begingroup
    \begin{tabular}{c}
    \includegraphics[width=0.9\textwidth]{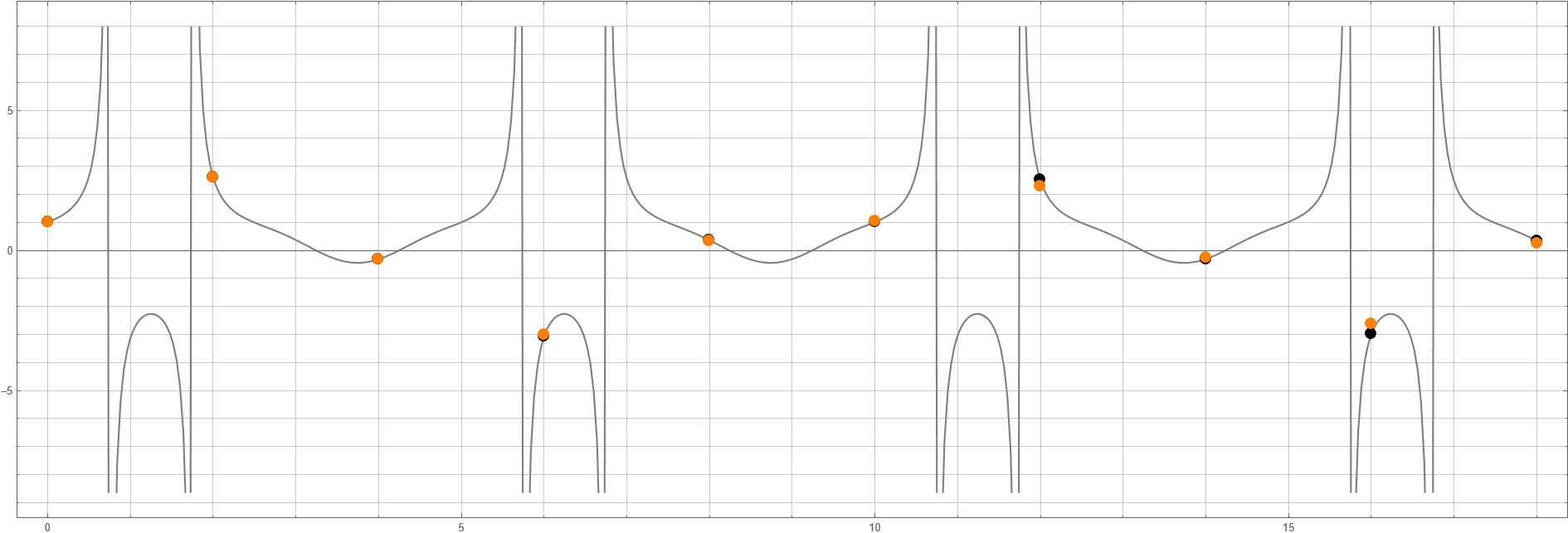} \\
    \includegraphics[width=0.9\textwidth]{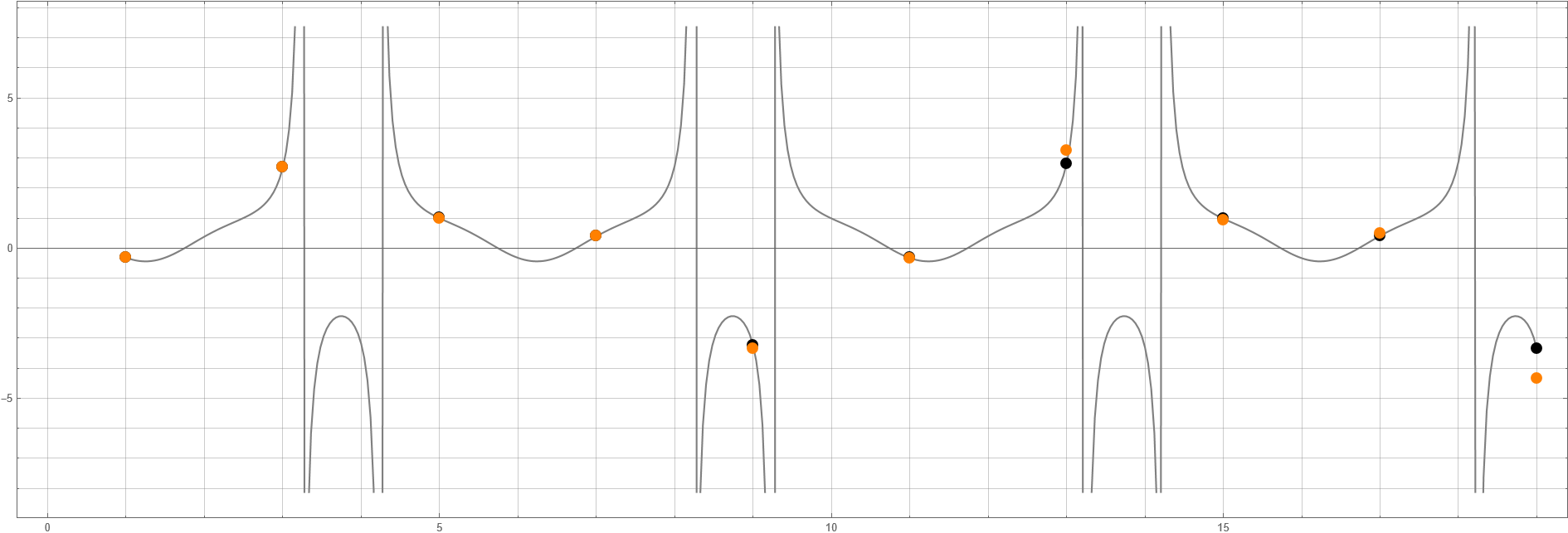}
    \end{tabular}
    \endgroup
    \caption{Elliptic behaviour example B} Parameters and initial conditions are $a=f_0=1$, $t_0=-\sqrt{2}$, $E_0=-1.4$ and $\epsilon=1/1000$. The top and bottom plots distinguish between even and odd iterates of $f_n$. The orange points represent the numerically computed sequence $f_n$ solving Equation (\ref{eq:qP2}), and the black points are given by Theorem \ref{thrm:elliptic}.
    \label{fig:numeric2}
\end{figure}

We proceed to discuss the periodicity of $G(z):=A\,\text{sn}(z;k)$, this having two fundamental periods which are linearly independent in the complex $z$-plane, given $k\neq0,\pm1$.
\begin{definition}\label{def:periods}
    Given the function $G(z):=A\,\text{sn}(z;k)$, where $A^2=k$ and $0<|k|<1$, we define constants $\Omega_1,\Omega_2\in\mathbb{C}^*$ such that $G(z+\Omega_j)=G(z)$ for all $z\in\mathbb{C}$ and $j\in\{1,2\}$. Furthermore, $\Omega_1\neq r\Omega_2$ for all $r\in\mathbb{R}^*$.
\end{definition}
\begin{corollary}\label{cor:Periods}
     The constants $\Omega_1,\Omega_2$ described in Definition \ref{def:periods} are given by the complete elliptic integral of the first kind
        \begin{align}\label{int:Omega}
        \Omega(k):=\oint\frac{\text{d}u}{\sqrt{(1-u^2)(1-k^2u^2)}}.
    \end{align}
    These may be expanded in powers of $m:=k^2$ as follows
    \begin{align*}
        \Omega_1(m)=2h(m),\quad\Omega_2(m)=ih(1-m),\quad h(m):=&\sum_{j=0}^{\infty}\frac{\Gamma(j+1/2)^2}{\Gamma(j+1)^2}m^j.
    \end{align*}
\end{corollary}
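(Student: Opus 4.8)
The plan is to reduce the computation of $\Omega_1,\Omega_2$ to the classical inversion of the Jacobi sine, and then to extract the two fundamental periods as integrals over independent cycles of the underlying elliptic curve. First I would observe that multiplication by the nonzero constant $A$ does not affect periodicity, so $G(z)=A\,\text{sn}(z;k)$ shares its period lattice with $\text{sn}(z;k)$; it therefore suffices to determine the fundamental periods of $\text{sn}(\cdot;k)$ for $0<|k|<1$.

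Next I would invert the Jacobi sine via the substitution $u=\text{sn}(z;k)$. Differentiating and using $\text{sn}'=\text{cn}\,\text{dn}$ together with the identities $\text{cn}^2+\text{sn}^2=1$ and $\text{dn}^2+k^2\text{sn}^2=1$ recorded in the proof of Theorem \ref{thrm:elliptic}, one finds $\text{d}z = \text{d}u/\sqrt{(1-u^2)(1-k^2u^2)}$. Consequently every period of $\text{sn}$ equals the integral of this holomorphic differential over a closed cycle on the Riemann surface of $w^2=(1-u^2)(1-k^2u^2)$, which is exactly the contour integral $\Omega(k)$ of \eqref{int:Omega}. The four branch points are $u=\pm1$ and $u=\pm1/k$, and the two independent homology cycles produce the two fundamental periods.

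I would then evaluate these cycles explicitly. The cycle enclosing the cut $[-1,1]$ yields the real period $\Omega_1 = 4\int_0^1 \text{d}u/\sqrt{(1-u^2)(1-k^2u^2)} = 4K(k)$, where $K(k)=\int_0^{\pi/2}\text{d}\theta/\sqrt{1-k^2\sin^2\theta}$ after the substitution $u=\sin\theta$; the cycle enclosing $[1,1/k]$ yields the purely imaginary period $\Omega_2 = 2iK'(k)$ with $K'(k)=K(\sqrt{1-k^2})$. These are $\mathbb{R}$-linearly independent, as required by Definition \ref{def:periods}. I expect this to be the main obstacle: one must fix the branch cuts and the orientation of each cycle with enough care to pin down the numerical prefactors (the factor $4$ and the factor $2i$) rather than merely identifying the periods up to scale.

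Finally, for the series expansions I would expand the integrand of $K(k)$ by the binomial theorem, $(1-k^2\sin^2\theta)^{-1/2}=\sum_{j\ge0}\frac{\Gamma(j+1/2)}{\sqrt{\pi}\,\Gamma(j+1)}k^{2j}\sin^{2j}\theta$, which converges uniformly in $\theta$ for $|k|<1$, and integrate term by term using the Wallis--Beta evaluation $\int_0^{\pi/2}\sin^{2j}\theta\,\text{d}\theta = \frac{\sqrt{\pi}\,\Gamma(j+1/2)}{2\Gamma(j+1)}$. Combining these gives $K(k)=\tfrac12 h(m)$ with $m=k^2$, hence $\Omega_1=4K(k)=2h(m)$. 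The same computation applied to $K'(k)=K(\sqrt{1-m})=\tfrac12 h(1-m)$ gives $\Omega_2=2iK'(k)=ih(1-m)$, which completes the stated expansions.
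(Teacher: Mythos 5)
Your proposal is correct, and the first half coincides with the paper's argument: both reduce the periods to closed-contour integrals of $\text{d}u/\sqrt{(1-u^2)(1-k^2u^2)}$ on the underlying elliptic curve (the paper via the first-order ODE satisfied by $G$, you via inverting $\text{sn}$ after scaling away $A$ --- the same substance). The two proofs diverge at the expansion step. The paper never evaluates the cycles explicitly; instead it derives the Picard--Fuchs equation $k(1-k^2)\Omega''+(1-3k^2)\Omega'-k\Omega=0$, converts it to the hypergeometric equation $m(1-m)\Omega''+(1-2m)\Omega'-\tfrac{1}{4}\Omega=0$ in $m=k^2$, identifies $h(m)$ as the solution analytic at $m=0$ (together with a logarithmic companion solution $h_2$), and invokes the $m\to 1-m$ symmetry of that equation to obtain the second period. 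You instead compute the two homology cycles directly as Legendre integrals $\Omega_1=4K(k)$ and $\Omega_2=2iK'(k)$, and then expand $K$ by the binomial series and the Wallis--Beta evaluation. Each route has a genuine advantage: yours pins down the normalisation constants ($2h(m)$ rather than a multiple of $h(m)$, and the factor $i$) by explicit evaluation --- something the paper's ODE argument leaves implicit, since the hypergeometric equation only identifies the two-dimensional solution space and the matching of each period to a specific normalised solution still requires a computation of exactly the kind you perform (or a check at $m=0$, where $\text{sn}(z;0)=\sin z$ has period $2\pi=2h(0)$). Conversely, the paper's route avoids the branch-cut and orientation bookkeeping you correctly flag as the delicate point for complex $k$, and it exposes structural information (regular singular points, the logarithmic solution, the $m\leftrightarrow 1-m$ symmetry) that explains \emph{why} the two periods are related by $m\to 1-m$ rather than verifying it term by term.
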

\begin{proof}
    Let $G(z):=A\,\text{sn}(z;k)$, where $A^2=k$ and $0<|k|<1$. For all $z\in\mathbb{C}$, it is well known that the function $G(z)$ satisfies
    \begin{align*}
        \left(\frac{dG}{dz}\right)^2=\left(k-G^2\right)\left(1-kG^2\right).
    \end{align*}
    The two independent periods of $G(z)$ correspond to the two independent closed contours associated with the elliptic integral of the first kind
    \begin{align*}
        \Omega(k):=\oint\frac{\text{d}G}{\sqrt{(k-G^2)(1-kG^2)}}=\oint\frac{\text{d}u}{\sqrt{(1-u^2)(1-k^2u^2)}}.
    \end{align*}
    Letting prime denote differentiation with respect to $k$, we find that the integral $\Omega$ satisfies the ODE
    \begin{align*}
        k(1-k^2)\Omega''+(1-3k^2)\Omega'-k\Omega=0.
    \end{align*}
    Converting to the parameter $m=k^2$ (and prime now denotes differentiation with respect to $m$) gives the canonical hypergeometric differential equation
    \begin{align*}
        m(1-m)\Omega''+(1-2m)\Omega'-\frac{1}{4}\Omega=0.
    \end{align*}
    This ODE has regular singular points at $m\in\{0,1\}$. Furthermore, it is invariant under the transformation $m\rightarrow1-m$. Linearly independent solutions are given by
    \begin{align*}
        h_1(m)=&\sum_{j=0}^{\infty}\frac{\Gamma(j+1/2)^2}{\Gamma(j+1)^2}m^j,\\
        h_2(m)=&\log(m)\sum_{j=0}^{\infty}\frac{\Gamma(j+1/2)^2}{\Gamma(j+1)^2}m^j\\
        &+2\sum_{j=1}^{\infty}\frac{\Gamma(j+1/2)^2}{\Gamma(j+1)^2}\left(\psi(j+1/2)-\psi(j+1)+2\log(2)\right)m^j,
    \end{align*}
    where we denote the gamma function $\Gamma(z)$ and the digamma function $\psi(z):=\Gamma'(z)/\Gamma(z)$. Expansions around $m=1$ are obtained by simply transforming $m\rightarrow1-m$.
\end{proof}
\section{Averaging Method}\label{sec:averaging}
This section considers the order $\epsilon$ perturbation associated with the leading-order autonomous equation. We approximate the variation in $E_n$ over a fundamental period of the leading-order elliptic behaviour in terms of complete elliptic integrals.

We first give an expression for the $\mathcal{O}(\epsilon)$ part of $E_n-E_{0}$ for arbitrary $n$, and provide a bound on smaller order terms which we denote by $M_n$.
\begin{lemma}\label{lem:diffEn0}
    For any $n\in\mathbb{N}$ we have
    \begin{align}\label{eq:difference}
        E_n-E_{0}=&-\left(nL_n-\sum_{k=0}^{n-1}L_k\right)t_0\epsilon+M_n,
    \end{align}
    where
    \begin{align*}
        |M_n|<2|t_0|J_n\left(e^{|\epsilon|n}-1-|\epsilon|n\right)=\mathcal{O}\left(|\epsilon|^2n^2\right),\quad|\epsilon|n\ll1.
    \end{align*}
\end{lemma}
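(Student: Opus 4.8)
The plan is to return to the telescoping identity $E_k-E_{k-1}=-P_k(L_k-L_{k-1})$ derived in the proof of Lemma \ref{lem:LOEq}, and to isolate the genuinely $\mathcal{O}(\epsilon)$ contribution by splitting $P_k$ into its linear-in-$\epsilon$ part and a quadratic remainder. Writing $P_k=t_0k\epsilon+Q_k$, where $Q_k:=P_k-t_0k\epsilon=t_0\sum_{j=2}^{k}\binom{k}{j}\epsilon^j$ collects all terms of order $\epsilon^2$ and higher, summation of the telescoping identity from $k=1$ to $n$ gives
\begin{align*}
    E_n-E_0=-t_0\epsilon\sum_{k=1}^{n}k(L_k-L_{k-1})-\sum_{k=1}^{n}Q_k(L_k-L_{k-1}).
\end{align*}
I would evaluate the first sum by the discrete Abel identity $\sum_{k=1}^{n}k(L_k-L_{k-1})=nL_n-\sum_{k=0}^{n-1}L_k$, which reproduces precisely the stated leading term $-(nL_n-\sum_{k=0}^{n-1}L_k)t_0\epsilon$. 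The second sum is then, by definition, the error $M_n$.

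The next step is to recast $M_n=-\sum_{k=1}^{n}Q_k(L_k-L_{k-1})$ into a form amenable to estimation. Applying summation by parts exactly as in the derivation of Equation \eqref{eq:diffE0n}, and using $Q_0=0$ together with $Q_k-Q_{k-1}=(P_k-P_{k-1})-t_0\epsilon=\epsilon P_{k-1}$, one obtains the compact expression
\begin{align*}
    M_n=-Q_nL_n+\epsilon\sum_{k=0}^{n-1}P_kL_k.
\end{align*}
This rearrangement is the crucial manoeuvre: it trades the differences $L_k-L_{k-1}$, which need not be small, for the bounded quantities $L_k$ multiplied by factors ($Q_n$ and $\epsilon P_k$) that are provably small of order $\epsilon^2$ over the range of summation.

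It then remains to bound the two terms. Using $|L_k|\le J_n$ for $0\le k\le n$ (Definition \ref{def:bounds}), the estimate $|Q_n|<|t_0|(e^{n|\epsilon|}-1-n|\epsilon|)$ obtained exactly as the bound on $|P_n|$ in Lemma \ref{lem:LOEq} but retaining only terms of order $\epsilon^2$ and higher in the binomial majorant, and $|P_k|<|t_0|(e^{k|\epsilon|}-1)$, gives
\begin{align*}
    |M_n|<|t_0|J_n\left(e^{n|\epsilon|}-1-n|\epsilon|\right)+|\epsilon||t_0|J_n\sum_{k=0}^{n-1}\left(e^{k|\epsilon|}-1\right).
\end{align*}
The main technical point is to show the residual sum does not spoil the $\mathcal{O}(\epsilon^2n^2)$ order. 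Summing the geometric series $\sum_{k=0}^{n-1}e^{k|\epsilon|}=(e^{n|\epsilon|}-1)/(e^{|\epsilon|}-1)$ and using the elementary inequality $e^{|\epsilon|}-1\ge|\epsilon|$ yields $|\epsilon|\sum_{k=0}^{n-1}(e^{k|\epsilon|}-1)\le e^{n|\epsilon|}-1-n|\epsilon|$, so the two contributions combine into the claimed bound $|M_n|<2|t_0|J_n(e^{n|\epsilon|}-1-n|\epsilon|)$, with the final order statement following from $e^{n|\epsilon|}-1-n|\epsilon|=\tfrac{1}{2}(n|\epsilon|)^2+\cdots$. I expect the only genuine subtlety to be the second summation by parts producing the clean form for $M_n$; once that is in hand, the estimates are routine majorisations of the type already deployed in Lemma \ref{lem:LOEq}.
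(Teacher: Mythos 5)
Your proposal is correct and follows essentially the same route as the paper: the decomposition $P_k=t_0k\epsilon+Q_k$ is exactly the paper's $\tilde{P}_k=P_k-t_0\epsilon n$, the summation-by-parts form $M_n=-Q_nL_n+\epsilon\sum_{k=0}^{n-1}P_kL_k$ coincides with the paper's expression for $M_n$, and the final majorisations (including the geometric-series step, which the paper leaves implicit) yield the identical bound $2|t_0|J_n\left(e^{|\epsilon|n}-1-|\epsilon|n\right)$.
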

\begin{proof}
    Defining $\Tilde{P}_n=P_n-t_0\epsilon n$, following Equation \eqref{eq:EdiffFirst} we have
    \begin{align*}
        E_k-E_{k-1}=&-t_0\epsilon(L_k-L_{k-1})k-(L_k-L_{k-1})\Tilde{P}_k,\quad \forall k\in\mathbb{N}.
    \end{align*}
    Therefore, for arbitrary $n\in\mathbb{N}$, we sum the above equations to obtain
    \begin{align*}
        E_n-E_{0}=&-\left(nL_n-\sum_{k=0}^{n-1}L_k\right)t_0\epsilon+M_n,
    \end{align*}
    where, similar to Equation \eqref{eq:diffE0n}, we express $M$ in the form
    \begin{align*}
        M_n=&-\sum_{k=1}^{n}(L_k-L_{k-1})\Tilde{P}_k=-\Tilde{P}_nL_n+t_0\epsilon\sum_{k=0}^{n-1}L_k\left(\frac{\Tilde{P}_k}{t_0}+\epsilon k\right).
    \end{align*}
    Then using the fact that $|\Tilde{P}_n|<|t_0|(e^{|\epsilon|n}-1-|\epsilon|n)$ for all $n\in\mathbb{N}$, we see that
    \begin{align*}
        |M_n|<&|t_0|J_n\left\{\left(e^{|\epsilon|n}-1-|\epsilon|n\right)+|\epsilon|\sum_{k=0}^{n-1}\left(e^{|\epsilon|k}-1\right)\right\}\\
        <&2|t_0|J_n\left(e^{|\epsilon|n}-1-|\epsilon|n\right),
    \end{align*}
    as required.
\end{proof}
Now, we apply the averaging method, deducing an approximation for the variation in $E$ over a fundamental period of the leading-order elliptic behaviour. For $a=1$ and all $0\leq n\ll1/|\epsilon|$, as $\epsilon\to0$ we have coordinates $(f_{n+1},f_n)$ confined to a level curve of the surface
\begin{align*}
    E_0(x,y)=\frac{x^2y^2+t_0xy(x+y)+t_0(x+y)+1}{xy}.
\end{align*}
In what proceeds, we suppose that there is some $\eta\in\mathbb{N}$ which is a \emph{near-period} of the discrete function $f_n$. That is, $\eta$ is close to a finite period of the leading order elliptic behaviour, given by Integral \eqref{int:Omega} with some non-trivial closed contour.
\begin{theorem}\label{thrm:main2}
    Let $a=1$ and assume $|L^{(k)}(0)|\leq R$ for all $k\in\mathbb{N}$ and some finite $R\in\mathbb{R}^+$. Furthermore, suppose we have $\eta\in\mathbb{N}$ such that $|\eta-\Omega/p|\ll1$, where $\Omega(k)$ is a period of the leading-order elliptic function given by Integral \eqref{int:Omega}. Then, regarding the difference $E_\eta-E_0$ we obtain
    \begin{align*}
        E_\eta-E_{0}\sim&-\frac{t_0\epsilon}{p}\left\{(4+L_0)\Omega(k)-8\Pi(k/c^2,k)\right\}+S_\eta,\quad\epsilon\to0,
    \end{align*}
    where $\Pi(\alpha^2,k)$ is the complete elliptic integral of the third kind with modulus $k$ and parameter $\alpha^2$, and
    \begin{align*}
        |S_\eta|<R|t_0\epsilon|\left\{|\eta-\Omega/p|e^{|\eta-\Omega/p|}+\left(\left|\Omega/p\right|+1\right)\left(e^{|\eta-\Omega/p|}-1\right)\right\}=\mathcal{O}\left(|\epsilon||\eta-\Omega/p|\right).
    \end{align*}
\end{theorem}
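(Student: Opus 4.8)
The plan is to begin from the exact decomposition of Lemma~\ref{lem:diffEn0}, specialised to $n=\eta$,
\begin{align*}
    E_\eta - E_0 = -t_0\epsilon\left(\eta L_\eta - \sum_{k=0}^{\eta-1}L_k\right) + M_\eta,
\end{align*}
where $M_\eta=\mathcal{O}(|\epsilon|^2\eta^2)$ is subdominant to the $\mathcal{O}(\epsilon)$ leading term and is therefore absorbed into the asymptotic relation. The whole task then reduces to evaluating the bracketed combination of $L$-values as $\epsilon\to0$, by replacing the discrete sequence $L_k$ with the continuous periodic function $L(x)$ of Definition~\ref{def:cont} and recognising the outcome as complete elliptic integrals.

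First I would pass to the continuous period integral. Writing $\nu:=\Omega/p$ for the exact period of the leading-order elliptic behaviour in the index variable, the aim is to show that $\eta L_\eta-\sum_{k=0}^{\eta-1}L_k\to \nu L(0)-\int_0^\nu L(x)\,dx$ as $\epsilon\to0$, up to the error $S_\eta$. The term $\eta L_\eta$ is controlled by periodicity: since $L(\nu)=L(0)=L_0$, I would split $\eta L(\eta)-\nu L(\nu)=(\eta-\nu)L(\eta-\nu)+\nu\big(L(\eta-\nu)-L(0)\big)$, and the hypothesis $|L^{(k)}(0)|\le R$ bounds the Taylor series by $|L(\eta-\nu)|\le Re^{|\eta-\nu|}$ and $|L(\eta-\nu)-L(0)|\le R(e^{|\eta-\nu|}-1)$, which already produces the two characteristic pieces of the bound on $S_\eta$. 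The sum is then compared with $\int_0^\nu L\,dx$ under the same Taylor control: in the idealised case $\eta=\nu$ the unit-spaced samples reproduce the period integral up to an exponentially small Fourier/trapezoidal tail (swallowed by the $\sim$), while the near-period gap contributes a further $\mathcal{O}(|\eta-\nu|)$ discrepancy, again estimated through $R$. Collecting these yields the stated form of $S_\eta$.

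Next I would evaluate the period integral in closed form. With $a=1$ one has $L_k=(f_{k+1}+1/f_{k+1})+(f_k+1/f_k)$, so the continuous function factorises as $L(x)=g(x+1)+g(x)$ with $g(x)=f(x)+1/f(x)$; because $g$ has period $\nu$, the unit shift integrates away and $\int_0^\nu L\,dx=2\int_0^\nu g\,dx$. Substituting $f=(c+G)/(c-G)$ with $G=A\,\text{sn}(z_0+px)$ and $A^2=k$ gives the partial-fraction form
\begin{align*}
    g=\frac{2(c^2+G^2)}{c^2-G^2}=-2+\frac{4c^2}{c^2-k\,\text{sn}^2(z_0+px)}.
\end{align*}
Changing variables to $z=z_0+px$ turns $\int_0^\nu g\,dx$ into $\tfrac1p\oint g\,dz$ over one period in $z$: the constant term integrates to $-2\Omega(k)/p$ and the rational term is exactly the complete elliptic integral of the third kind with parameter $k/c^2$, giving $4\Pi(k/c^2,k)/p$. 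Hence $\int_0^\nu L\,dx=\tfrac1p\big(-4\Omega(k)+8\Pi(k/c^2,k)\big)$, while $\nu L_0=\Omega(k)L_0/p$; assembling these with the prefactor $-t_0\epsilon$ reproduces precisely $-\tfrac{t_0\epsilon}{p}\{(4+L_0)\Omega(k)-8\Pi(k/c^2,k)\}$.

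I expect the main obstacle to be the first step: rigorously converting the discrete sum over one near-period into the continuous period integral with the sharp error $S_\eta$. The delicate points are that the lattice points $0,1,\dots,\eta-1$ are only \emph{approximately} equally spaced across the true period $\nu$, so the favourable spectral accuracy of sampling a periodic analytic function is exact only in the idealised case $\eta=\nu$, and that the replacement $L_k\to L(k)$ must be justified uniformly over the $\eta$ summands via Theorem~\ref{thrm:elliptic}. The derivative hypothesis $|L^{(k)}(0)|\le R$ is exactly what tames these effects, funnelling every discrepancy into the single small parameter $|\eta-\Omega/p|$.
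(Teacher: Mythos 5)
Your overall architecture matches the paper's: you start from Lemma \ref{lem:diffEn0} at $n=\eta$, aim to replace $\eta L_\eta-\sum_{k=0}^{\eta-1}L_k$ by $(\Omega/p)L_0-\int_0^{\Omega/p}L(x)\,dx$, and your closed-form evaluation of the period integral (via $g=f+1/f=-2+4c^2/(c^2-k\,\mathrm{sn}^2)$, the unit-shift periodicity trick, and passing to the variable $z=z_0+px$) is correct and essentially identical to the paper's computation of $\frac{4}{p}\left(2\Pi(k/c^2,k)-\Omega(k)\right)$. Your handling of the $\eta L_\eta$ term, splitting via periodicity and bounding through the Taylor series generated by $|L^{(k)}(0)|\leq R$, also parallels the paper's expansion of $L^{(k)}(\eta)$ about $\Omega/p$ combined with $L^{(j)}(\Omega/p)\to L^{(j)}(0)$.

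The genuine gap is the step you yourself flag as the main obstacle: converting $\sum_k L_k$ into $\int_0^{\Omega/p}L\,dx$ with the stated bound on $S_\eta$. Your justification, that for $\eta=\nu$ the unit-spaced samples reproduce the period integral ``up to an exponentially small Fourier/trapezoidal tail (swallowed by the $\sim$)'', fails in this regime. Spectral accuracy of the trapezoidal rule is an asymptotic statement in the number of sample points, but here that number is $\eta\approx\Omega/p$, a fixed finite quantity as $\epsilon\to0$; for a general analytic periodic integrand the quadrature defect is an $\epsilon$-independent constant, and after multiplication by $t_0\epsilon$ it enters at exactly the same order as the main term, so it can neither be absorbed into the $\sim$ nor reconciled with the claimed bound $|S_\eta|=\mathcal{O}\left(|\epsilon||\eta-\Omega/p|\right)$, which vanishes as $\eta\to\Omega/p$. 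What closes this gap in the paper is the Euler--Maclaurin formula: one writes $\sum_{k=1}^{\eta}L(k)=\int_0^{\Omega/p}L+\int_{\Omega/p}^{\eta}L+\sum_{k\geq1}\frac{B_k}{k!}\left(L^{(k-1)}(\eta)-L^{(k-1)}(0)\right)$, Taylor-expands every boundary term about $\Omega/p$, annihilates the leading parts by periodicity, and bounds the resulting double series using $|L^{(k)}(0)|\leq R$ together with $\sum_{j\geq1}|B_j/j!|<1$; this is precisely where the piece $\left(|\Omega/p|+1\right)\left(e^{|\eta-\Omega/p|}-1\right)$ of the bound comes from, whereas your two ``characteristic pieces'' recover only the contribution of the $\eta L_\eta$ term. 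Under the hypothesis the Euler--Maclaurin series genuinely converges (the derivative bound forces $L$ to be of exponential type at most one), so every discrepancy carries a factor $(\eta-\Omega/p)^k$, $k\geq1$; without this device, or an equivalent band-limitedness/aliasing argument exploiting the same hypothesis, the quadrature error is neither shown small nor funnelled into $|\eta-\Omega/p|$, and the theorem's error estimate is not established.
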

\begin{proof}
    For finite $\eta\in\mathbb{N}$, in Lemma \ref{lem:diffEn0} we showed that
    \begin{align*}
        E_\eta-E_{0}\sim&-\left((\eta+1)L_\eta-L_0-\sum_{k=1}^{\eta}L_k\right)t_0\epsilon,\quad\epsilon\to0.
    \end{align*}
    We apply the Euler-Maclaurin formula to the sum in the above equation, seeing that
    \begin{align*}
        \sum_{k=1}^{\eta}L(k)=\int_{0}^{\Omega/p}L(x)dx+\int_{\Omega/p}^{\eta}L(x)dx+\sum_{k=1}^{\infty}\frac{B_k}{k!}\left(L^{(k-1)}(\eta)-L^{(k-1)}(0)\right),
    \end{align*}
    where $B_k$, for $k\in\mathbb{N}$, are the Bernoulli numbers. Now, we utilise the expansion
    \begin{align*}
        L^{(k)}(\eta)=&\sum_{j=k}^{\infty}\frac{L^{(j)}(\Omega/p)}{(j-k)!}(\eta-\Omega/p)^{j-k},\quad k\in\mathbb{N},
    \end{align*}
    also noting that $L^{(j)}(\Omega/p)\to L^{(j)}(0)$ as $\epsilon\to0$ for all $j\in\mathbb{N}$. Applying these concepts, we obtain
    \begin{align}\label{eq:rawInt}
        &E_\eta-E_{0}\sim-t_0\epsilon\left(\frac{\Omega}{p}L_0-\int_{0}^{\Omega/p}L(x)dx\right)+S_\eta,
    \end{align}
    where
    \begin{align*}
        S_\eta=&-t_0\epsilon\sum_{k=1}^{\infty}\frac{\theta_k}{k!}(\eta-\Omega/p)^{k},
    \end{align*}
    and
    \begin{align*}
        \theta_k=(k-1)L^{(k-1)}(0)+\left(\Omega/p+1\right)L^{(k)}(0)-\sum_{j=1}^{\infty}\frac{B_j}{j!}L^{(j+k-1)}(0).
    \end{align*}
    Using the assumption that $|L^{(k)}(0)|\leq R$ for all $k\in\mathbb{N}$, and also noting
    \begin{align*}
        \sum_{j=1}^{\infty}|B_j/j!|<1,
    \end{align*}
    we may verify that
    \begin{align*}
        |S_\eta|<R|t_0\epsilon|\left\{|\eta-\Omega/p|e^{|\eta-\Omega/p|}+\left(\left|\Omega/p\right|+1\right)\left(e^{|\eta-\Omega/p|}-1\right)\right\}\ll|\epsilon|,
    \end{align*}
    for $|\eta-\Omega/p|\ll1$. Regarding the integral in Equation \eqref{eq:rawInt}, recall that in the $a=1$ case we have simply
    \begin{align*}
        L(x)=f(x)+\frac{1}{f(x)}+f(x+1)+\frac{1}{f(x+1)},
    \end{align*}
    and the continuous behaviour of $f(x)$ as $\epsilon\to0$ is stated in Definition \ref{def:cont}. Then, since the integral is across a period of the leading-order elliptic behaviour, we find that
    \begin{align*}
        \int_{0}^{\Omega/p}L(x)dx=&\frac{4}{p}\oint\left(\frac{1+ku^2/c^2}{1-ku^2/c^2}\right)\frac{\text{d}u}{\sqrt{(1-u^2)(1-k^2u^2)}}\\
        =&\frac{4}{p}\left(2\Pi(k/c^2,k)-\Omega(k)\right),
    \end{align*}
    where $\Pi(k/c^2,k)$ is a standard notation for the complete elliptic integral of the third kind, with parameter $k/c^2$ and modulus $k$. Applying this to Equation \eqref{eq:rawInt} completes the proof.
\end{proof}
\section{Conclusion}
In this article, we gave a generic transcendental solution $f(t)$ of the $q$-difference second Painlev\'e equation to leading order by elliptic functions in a bounded region near an arbitrary point in the $t$-plane, achieved by setting $t=t_0(1+\epsilon)^n$ with a small parameter $|\epsilon|\ll1$. Furthermore, by the averaging method, we studied the perturbation of this leading-order behaviour as we traverse an edge of the associated period parallelogram; this slow modulation is given in terms of complete elliptic integrals.

Future questions include how this analysis applies to other $q$-difference Painlev\'e equations, particularly considering that analogous results between $\Pone$ to $\Pfive$ share a remarkable similarity \cite{joshi2018asymptotic}. Questions remain concerning elliptic-type behaviours associated with more complex $q$-difference Painlev\'e equations (potentially in a different asymptotic limit to what has been considered here) and how this reflects the asymptotic theory of classical Painlev\'e equations.
\appendix
\section{Critical Points}\label{sec:CritPoints} In the $a=1$ case, we find that critical points $k=\pm1$ are given by two potential conditions in terms of $E_0$: (a) $2+E_0+t_0^2=0$ and (b) $|E_0|\rightarrow\infty$. On the other hand, we find that the three conditions give $k=0$: (a) $2+E_0=0$, (b) $2-E_0+4t_0=0$ and (c) $2-E_0-4t_0=0$. We proceed to give a summary of expansions for $f_n$ as $E_0$ approaches each of these critical points, whereby $f$ is now degenerating from elliptic to constant or singly-periodic in the limit $\epsilon\to0$. Here we may apply known results regarding the elliptic sine sn$(z;k)$ as the modulus $k$ approaches $0,\pm1$, recalling that for bounded $n\in\mathbb{N}$ we have
\begin{align*}
    f_n\sim g(z):=\frac{c+A\,\text{sn}(z;k)}{c-A\,\text{sn}(z;k)},\quad z:=z_n=z_0+pn,\quad\epsilon\rightarrow0.
\end{align*}
These degenerate behaviours correspond to certain algebraic curves in the initial value space, those being level curves of the surface given by
\begin{align*}
    E_0(f_0,f_1)=\frac{f_{1}^2f_0^2+t_0f_{1}f_0(f_{1}+f_0)+t_0(f_{1}+f_{0})+1}{f_{1}f_0},
\end{align*}
where the specific level curves in question include (or are solely) a critical point of the surface.
\subsection{Case $k^2\to1$ (a)} For $\delta:=E_0+2+t_0^2$, behaviour as $\delta\to0$ is given by
\begin{align*}
    g(z)=\frac{c_0+\tanh(z)}{c_0-\tanh(z)}+\mathcal{O}\left(\delta^{1/2}\right),\quad c_0^4=\frac{(t_0-2)^2}{(t_0+2)^2}.
\end{align*}
\subsection{Case $k^2\to1$ (b)} As $|E_0|\to\infty$ we obtain
\begin{align*}
    g(z)=\frac{1+\tanh(z)}{1-\tanh(z)}-\frac{4t_0\tanh(z)}{(\tanh(z)-1)^2}E_0^{-1}+\mathcal{O}\left(E_0^{-3/2}\right).
\end{align*}
\subsection{Case $k^2\to0$ (a)}\label{subsec:rational1} For $\delta:=E_0+2$, behaviour as $\delta\to0$ is given by
\begin{align*}
    &g(z)=1+\frac{2A_0\sin(z)}{c_0}\delta^{1/2}+\frac{2A_0^2\sin^2(z)}{c_0^2}\delta+\mathcal{O}\left(\delta^{3/2}\right),\\
    &c_0^4=\frac{1-t_0}{1+t_0},\quad A_0^4=-\frac{(t_0+1)(t_0-1)}{16t_0^4}.
\end{align*}
\subsection{Case $k^2\to0$ (b)}\label{subsec:rational} For $\delta:=E_0-2(1+2t_0)$, behaviour as $\delta\to0$ is given by
\begin{align*}
    &g(z)=1+\frac{2A_0\sin(z)}{c_0}\delta^{1/2}+\frac{2A_0^2\sin^2(z)}{c_0^2}\delta+\mathcal{O}\left(\delta^{3/2}\right),\\
    &c_0^4=8t_0,\quad A_0^4=\frac{(t_0+1)^2}{2t_0(t_0+2)^2}.
\end{align*}
\subsection{Case $k^2\to0$ (c)} For $\delta:=E_0-2(1-2t_0)$, behaviour as $\delta\to0$ is given by
\begin{align*}
    g(z)=\frac{c_0+A_0\sin(z)}{c_0-A_0\sin(z)}+\mathcal{O}(\delta),\quad c_0^4&=-\frac{1}{8t_0},\quad A_0^4=-\frac{(t_0-1)^2}{2t_0(t_0-2)^2}.
\end{align*}
\begin{remark}
    We note that Cases \ref{subsec:rational1} and \ref{subsec:rational} corresponds to the exact solution $f(t)\equiv1$ when $a=1$, which is the seed of a known hierarchy of rational solutions which exist for parameter values $a=q^{4k}$, $k\in\{0,1,2,\dots\}$.
\end{remark}
\printbibliography
\end{document}